\numberwithin{equation}{section}
\newtheorem{theorem}[equation]{Theorem}
\newtheorem*{theorem*}{Theorem}
\newtheorem*{Hodge Conjecture}{Hodge Conjecture}
\newtheorem*{Hodge Conjecture (reduced)}{Hodge Conjecture (reduced)}
\newtheorem*{Lefschetz $(1,1)$-theorem}{Lefschetz $(1,1)$-theorem}
\newtheorem*{Jacobi Inversion Theorem}{Jacobi Inversion Theorem}
\newtheorem*{Claim}{Claim}
\newtheorem*{Base Change Theorem}{Base Change Theorem}
\newtheorem{lemma}[equation]{Lemma}
\newtheorem{corollary}[equation]{Corollary}
\theoremstyle{definition}
\theoremstyle{remark}
\newtheorem*{remark}{Remark}
\theoremstyle{plain}
\newcommand{\opname}[1]{\operatorname{\mathsf{#1}}}
\newcommand{\ie}{{\rm i.e.}}
\newcommand{\Pic}{\opname{Pic}}
\newcommand{\C}{\mathbb{C}}
\newcommand{\Z}{\mathbb{Z}}
\renewcommand{\L}{\mathbb{L}}
\renewcommand{\P}{\mathbb{P}}
\newcommand{\pr}{\mathit{pr}}
\newcommand{\Hom}{\operatorname{Hom}}
\DeclareMathOperator{\im}{im}
\renewcommand{\Im}{\operatorname{Im}}
\DeclareMathOperator{\GL}{GL}
\newcommand{\sg}{\opname{sg}}
\newcommand{\sm}{\opname{sm}}
\newcommand{\F}{\mathbb{F}}
\renewcommand{\O}{\mathbb{O}}
\newcommand{\Mod}{\opname{Mod}}
\renewcommand{\tilde}[1]{\widetilde{#1}}
\newcommand{\M}{\mathbb{F}}
\renewcommand{\a}{\opname{a}}
\renewcommand{\pr}{\opname{pr}}
\renewcommand{\Pic}{\opname{Pic}}
\newcommand{\van}{\opname{van}}
\newcommand{\p}{\opname{p}}
\begin{document}

%\date{September 2018, last modified on \today}

\title[Tube classes over elementary vanishing cycles]{Tube classes over elementary vanishing cycles}
\author{Erjuan Fu}
%\address{University of Utah\\
%    Department of Mathematics\\
%   155 South 1400 East, JWB 328 \\
%   Salt Lake City, UT 84112\\
%  Phone: 801-585-5469}
\address{Yau Mathematical Sciences Center,
Tsinghua University, Beijing, 100084, China}
  %\curraddr{Yau Mathematical Sciences Center, Tsinghua University, Beijing, 100084, China}
 \email{ejfu@tsinghua.edu.cn; 473051732@qq.com}

\begin{abstract}
Let $X$ be a closed Riemann surface. When $X$ is embedded into a projective space, the first rational cohomology group can be concretely obtained from the monodromy in the family of its smooth hyperplane sections by  C. Schnell's  tube mapping. We generalize  this result to the first integral homology group by
relating the tube mapping with the topological Abel--Jacobi mapping. By making use of the mapping class group action, we prove that all tube classes constructed from the elementary vanishing cycles form a cofinite subgroup of  the first integral homology group of $X$.
\end{abstract}

\hypersetup{%colorlinks=true,
linkcolor=blue}
\maketitle
%\tableofcontents

\section{Introduction}\label{s:introduction}
Let $X$ be a closed Riemann surface of genus $g \geq 1$ and $L$ be a line bundle of degree $d\geq 2g+1$ over $X$
which induces a closed embedding $i_L: X \hookrightarrow |L|^\vee:=\P^N$. %\footnote{In this paper, we denote by $(-)^\vee:=\Hom_{\C}(-, \C)$.}%, where $R$ can be $\Z, \Q, \R$ and $\C$.}.
%Let $X$ be a nondegenerate smooth complex project curve in a projective space $\P^N$.
Denote by $\O^{\sm}$ %be the set of all hyperplanes $H$ such that $X\cap H$ is smooth.
the Zariski-open subset of $|L|$, which parametrizes all smooth hyperplane sections of $X$.
Fix $t_0\in \O^{\sm}$ and denote by $$X_{t_0}:=\{x\in X\, \vert \, t_0(x)=0\}=\{P_0, P_1, \cdots, P_{d-1}\}.$$ We have the vanishing homology group $$H_0^{\van}(X_{t_0}, \Z):=\ker\left\{H_0(X_{t_0},\Z) \xrightarrow{j_*} H_0(X, \Z)\right\}=\bigoplus_{i=1}^{d-1} \Z(P_i-P_0),$$ where $j: X_{t_0}\hookrightarrow X$.
Note that any two smooth hyperplane sections are diffeomorphic to each other, it is possible to transport homology classes among nearby ones, which gives rise to an action of the fundamental group $\pi_1(\O^{\sm}, t_0)$ on $H_0^{\van}(X_{t_0}, \Z)$, \ie,
we have the following monodromy representation
\begin{equation*}
\rho_{\van} : \pi_1(\O^{\sm}, t_0) \to \opname{Aut}(H_0^{\van}(X_{t_0}, \Z))\subseteq \opname{Aut}(X_{t_0})=S_d,
\end{equation*}
where $S_d$ denotes the group of permutations on $d$ objects.
With this monodromy action, we have the tube mapping \cite{Tube}
\begin{equation*}
\tau: \bigoplus\limits_{\gamma \in \pi_1(\O^{\sm}, t_0)} \left\{D \in H_0^{\van}(X_{t_0}, \Z)\,\big|\, \gamma D =D \right\} \to H_1(X, \Z).
\end{equation*}
C. Schnell \cite[Theorem 1]{Tube} proved that this tube mapping $\tau$ is surjective over \emph{rational} coefficient homology if $H_0^{\van}(X_{t_0}, \Z)\neq 0$. In this paper, our goal is to generalize this result by considering only elementary vanishing cycles and relating the tube mapping with the topological Abel--Jacobi mapping. By an elementary vanishing cycle, we mean that it specializes an indivisible vanishing cycle which vanishes as approaching to one singular fiber in a family. For instance, $P_i-P_j$ is an elementary vanishing cycle for any $i\neq j$.
Denote by $\a_X^{\opname{top}}$ the topological Abel--Jacobi mapping. The induced topological Abel--Jacobi homomorphism on fundamental group %$\opname{a}_{X*}^{\opname{top}}$
\begin{equation*}\label{top-map}
  \opname{a}_{X*}^{\opname{top}}: \bigoplus\limits_{D \in H_0^{\van}(X_{t_0}, \Z)}\left\{\gamma\in\pi_1(\O^{\sm}, t_0)\,\vert\, \gamma D=D \right\}\to  H_1(X, \Z)
  \end{equation*}
coincides with the tube map $\tau$ up to a sign \cite[Section 3]{Fu}.

Note that  monodromy representations can be computed using a Lefschetz pencil %\cite[Section 2.1]{Voisin2}
by Zariski's Theorem \cite[Theorem 3.22]{Voisin2}, instead of considering all hyperplane sections, we only need to pick a Lefschetz pencil of hyperplane sections.
Let $\L$ be a general projecitve line through $t_0$  in $\O$, %\footnote{In this dissertation, we use $X^\vee$ to denote the projective dual variety of $X$  (\cf Section \ref{ProjDual}).}
then $\{X_t\}_{t\in \L}$ forms a Lefschetz pencil \cite[Corollary 2.10]{Voisin2}, where $X_t=\{x\in X\,|\, t(x)=0\}$. Denote by $\L^{\sm}:=\L\cap \O^{\sm}$ which parametrizes all smooth hyperplane sections of $X$ over $\L$. Our main theorem is stated as follows.

\begin{theorem}\label{thm-main}
Let $X$ be a nondegenerate smooth complex projective curve in $\P^N$ and $\L$ be a Lefschetz pencil of hyperplane sections in $\P^N$. Fix $t_0\in \L^{\sm}$, denote by $X_{t_0}:=\{P_0, \cdots, P_{d-1}\}$ the smooth hyperplane section over $t_0$.
Let $G_{ij}$ be the stabilizer of $P_i-P_j$ in $\pi_1(\L^{\sm}, t_0)$. The image of the tube mapping
$$\tau: \bigoplus\limits_{i=1}^{d-1}H_1(G_{0i}, \Z)%=\bigoplus\limits_{g\in  G^t } V_t^g
\to H_{1}(X,\Z)$$
is $m_dH_1(X,\Z)$, where $m_d$ is some nonzero integer only depending on the degree $d$.
\end{theorem}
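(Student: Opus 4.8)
The plan is to reinterpret the tube map through the branched–cover geometry of the pencil and then to pin down its image by a symmetry argument coming from the mapping class group. Since a general Lefschetz pencil $\L$ has empty base locus on the curve $X$, the incidence variety $\{(x,t)\,|\,t(x)=0\}$ is isomorphic to $X$, and the projection to $\L$ realizes the pencil as a simple branched covering $\pi\colon X\to\L\cong\P^1$ of degree $d$ whose branch points are the critical values; thus $\L^{\sm}$ is $\P^1$ minus $b=2g+2d-2$ points, and $X^{\circ}:=\pi^{-1}(\L^{\sm})\to\L^{\sm}$ is an honest $d$-sheeted covering with monodromy $\rho_{\van}$. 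First I would identify $G_{0i}$ with the fundamental group, based at $(P_0,P_i)$, of the connected cover $\mathcal{X}^{[2]}\to\L^{\sm}$ parametrizing ordered pairs of distinct sheets; this cover is connected because the Lefschetz monodromy is generated by transpositions and is transitive, hence is all of $S_d$ and in particular $2$-transitive. Under the stated coincidence of $\tau$ with the topological Abel--Jacobi homomorphism, the tube class of a loop $\gamma\in G_{0i}$ is $(\pr_2-\pr_1)_*$ of its lift to $\mathcal{X}^{[2]}$, where $\pr_1,\pr_2\colon\mathcal{X}^{[2]}\to X$ record the two points. Because $\mathcal{X}^{[2]}$ is connected, $H_1(G_{0i},\Z)=H_1(\mathcal{X}^{[2]},\Z)$ for every $i$, so the image of each summand, and therefore of $\tau$, equals the single subgroup $(\pr_2-\pr_1)_*H_1(\mathcal{X}^{[2]},\Z)\subseteq H_1(X,\Z)$.

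Next I would let the branch points move. Placing $\pi$ in the universal family over the Hurwitz space $B_{d,b}$ of simple degree-$d$ covers of $\P^1$ with $b$ branch points, the groups $H_1(X_s,\Z)$ form a local system with geometric monodromy $\mu\colon\pi_1(B_{d,b})\to\operatorname{Sp}(H_1(X,\Z))$, and the whole construction above (cover, loops, lifts, $\pr_2-\pr_1$) transports along $\mu$. Consequently $\Im(\tau)$ is a $\mu$-invariant sublattice. The essential input is that $\mu$ is surjective onto $\operatorname{Sp}_{2g}(\Z)$: the monodromy is generated by the Picard--Lefschetz transvections about the vanishing cycles of the pencil, which are mutually conjugate and span $H_1(X,\Z)$, and such transvections generate the full integral symplectic group. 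Since $\Im(\tau)$ contains a nonzero class and $H_1(X,\Q)$ is an irreducible $\operatorname{Sp}_{2g}(\Q)$-representation, $\Im(\tau)\otimes\Q=H_1(X,\Q)$, so $\Im(\tau)$ has finite index.

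I would then invoke the elementary lattice lemma: a finite-index subgroup of $\Z^{2g}$ invariant under $\operatorname{Sp}_{2g}(\Z)$ equals $m\Z^{2g}$ for some integer $m$ (reduce modulo prime powers and use that the standard module over $\Z/p^k$ admits no invariant submodule other than the $p^j$-multiples, since it is generated by any primitive vector). Combined with the previous step, this gives $\Im(\tau)=m_{d,g}\,H_1(X,\Z)$, and the connectivity of the Hurwitz space $B_{d,b}$ (Clebsch--Hurwitz) shows that $m_{d,g}$ depends only on the pair $(d,g)$, equivalently $(d,b)$, and not on the particular curve or its embedding.

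The remaining and hardest step is to remove the dependence on $g$. Here I would argue by stabilization: colliding a pair of branch points in a local model supported in a disk, independently of the ambient curve, relates the covers of genus $g$ and $g-1$, and I would show that the index of the tube lattice is unchanged by this operation, so that $m_{d,g}=m_{d,g-1}=\cdots$ is controlled by the minimal-genus case and hence depends only on $d$. I expect this genus-independence to be the main obstacle: it requires comparing the local systems and the $(\pr_2-\pr_1)_*$-images across the degeneration and verifying that the extra symplectic summand created by each added handle contributes the same multiple. This is precisely the point at which a careful, and possibly explicit, $S_d$-combinatorial computation of the cokernel $H_1(X,\Z)/\Im(\tau)$ seems unavoidable.
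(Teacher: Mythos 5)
Your reduction of the tube map to the connected cover $\mathcal{X}^{[2]}$ of ordered pairs of sheets, with all summands contributing the single subgroup $(\pr_2-\pr_1)_*H_1(\mathcal{X}^{[2]},\Z)$, is sound and closely parallels the paper's construction of the covers $\F\to\tilde\L\to\L$ and the homomorphisms $\phi_{0i*}$. The genuine gap is your pivotal claim that the homological monodromy $\mu$ of the Hurwitz space surjects onto $\opname{Sp}_{2g}(\Z)$ because ``the Picard--Lefschetz transvections about the vanishing cycles are mutually conjugate and the cycles span.'' That principle is false in general: for $d=2$ (double covers of $\P^1$ with $2g+2$ moving branch points) the vanishing cycles are mutually conjugate under the braid action and contain a spanning chain of simple closed curves, yet the action on $H_1(X,\Z/2)$ factors through the permutation action of $S_{2g+2}$ on even subsets of branch points, which is a proper subgroup of $\opname{Sp}_{2g}(\Z/2)$ for $g\geq 3$ (a classical computation going back to A'Campo); hence the integral monodromy is a proper subgroup of $\opname{Sp}_{2g}(\Z)$. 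So surjectivity of $\mu$ is a substantive theorem in its own right: for simple covers of degree $d\geq 3$ it can be extracted from results of Berstein--Edmonds type asserting that liftable braids generate the mapping class group, but not from the one-line transvection argument. Without it, your invariance step only shows $\Im\tau$ is stable under a possibly much smaller group, for which the classification of invariant lattices as $mH_1(X,\Z)$ need not hold. A second, smaller gap: you assert that $\Im\tau$ contains a nonzero class without argument; the paper needs \cite[Lemma 4.1]{Fu} for exactly this point, since Schnell's rational surjectivity uses all monodromy-invariant vanishing classes over the full $\O^{\sm}$, not only elementary ones over a pencil.

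Two further comparisons with the paper's route. First, the paper avoids monodromy computations altogether: it shows $\Im\tau$ is independent of the pencil (\cite[Lemma 4.1]{Fu}) and of the degree-$d$ line bundle $L$ --- the latter via connectedness of $\Pic^d(X)$, the Poincar\'e bundle, and the Base Change Theorem, which make the family $\mathfrak{P}\to\Pic^d(X)$ of embedded curves locally trivial, so that $L\mapsto\Im\varphi^L_{ij*}$ is a continuous map to a discrete set --- and then transports $\Im\tau$ under an arbitrary orientation-preserving diffeomorphism $f:X\to Y$ using $i_{f^*L}=i_L\circ f$, obtaining invariance under the full mapping class group, where Lemma \ref{thm-stable-mcg} (your lattice lemma, proved by explicit Dehn-twist formulas) applies. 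Second, your ``hardest remaining step,'' genus independence of $m$, is not actually required by the theorem as proved in the paper: $X$ is fixed in the statement, so what must be shown is independence of $L$ and $\L$ only, which your Hurwitz-connectivity step already delivers (just as the paper's steps do); the paper likewise establishes no more than dependence on the topological type of $X$ and on $d$. If you supply a correct reference for the surjectivity of $\mu$ when $d\geq 3$ and for $\tau\neq 0$, your argument closes and is genuinely different from the paper's, trading the algebro-geometric deformation through $\Pic^d(X)$ for topological monodromy of Hurwitz spaces.
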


Compare to the Strong Tube Theorem in \cite{Fu}, our result works for any complex projective curve which is not necessary a hyperplane section of a  surface.  The key point in our proof is the mapping class group action instead of the monodromy action in \cite{Fu}.

%This generalizes C. Schnell¡¯s result \cite[Theorem 1]{Tube} over rational coefficient to integral coefficient for the curve case.

In the future, we will explore the relation between $m_d$ and $d$ to see if we can show $\mathrm{Im}\tau=H_1(X,\Z)$ by constructing two cycles with intersection number having no square factors.

%The rest of this paper is organized as follows. In Section \ref{sec-top}, we give a concrete expression for the topological Abel--Jacobi mapping and then restate our main theorem.

\subsection*{Acknowledgments}
This work is part of  my Ph.D. thesis.
I am deeply grateful to my advisor Herb Clemens for introducing me this beautiful geometric construction and his invaluable intuition which have provided ideas for many proofs.
 I also would like to express my truly gratitude to my advisor Yuan-Pin Lee for his support and many useful advice. % in guiding my research. %Without their constant help, this work would not have been done.

%%%%%%%%%%%%%%%%%%%%%%%%%%%%%%%%%%%%%%%%%%%%%%%%%%%%%%%
%%% -*-LaTeX-*-
\section{The tube mapping and the topological Abel--Jacobi mapping} \label{sec-top}
 In this section, we will give a concrete expression for the topological Abel--Jacobi mapping and then restate our main theorem.
Let $\Omega$ be the bundle of holomorphic $1$-form on $X$. The Jacobian of $X$  is defined by
$$J(X)=\frac{H^0(X, \Omega)^\vee}{H_1(X, \Z)}\footnote{In this paper, we denote by $(-)^\vee:=\Hom_{\C}(-, \C)$.}%, where $R$ can be $\Z, \Q, \R$ and $\C$.}
$$
which is a complex torus and isomorphic to $\C^g/\Z^{2g}$, and
$$H_1(X, \Z) \hookrightarrow H^0(X, \Omega)^\vee$$
is given by integrating forms over chains.  %If the genus of $X$ is $g$, then $J(X)\cong\C^g/\Z^{2g} $.
%Let $\{\omega_1, \cdots, \omega_g\}$ be a basis of $H^0(X, \Omega)$.
 Fix a base point $p_0\in X$,  the Abel--Jacobi mapping is defined as
 \begin{equation*}
\begin{aligned}
\a_X : X &\to J(X) \\
p  &\mapsto \int_{p_0}^p.
\end{aligned}
\end{equation*}
For any $t\in \O^{\sm}$, denote by $$X_t=\{x\in X\, \vert \, t(x)=0\}:=\{P_1^t, P_2^t, \cdots, P_d^t\}.$$ The vanishing homology group can be described as $$H_0^{\van}(X_t, \Z):=\ker\left\{H_0(X_t,\Z) \xrightarrow{j_*^t} H_0(X, \Z)\right\}=\bigoplus_{i=1}^{d-1} \Z(P_i^t-P_0^t),$$ where $j^t: X_t \hookrightarrow X$.  The Abel--Jacobi mapping can be extended to  the vanishing homology group as
 \begin{align*}
 \opname{a}_{X,t}^{\opname{top}}: H_{0}^{\van}(X_{t}, \Z) &\to J(X),\\
\sum\limits_{i=1}^{d-1} n_i(P_i^t-P_0^t) &\mapsto \sum\limits_{i=1}^{d-1} n_i \int_{P_0^t}^{P_i^t},
 \end{align*}
 which is the topological Abel--Jacobi mapping \cite{Fu, TopAJ}.

 Furthermore, in order to combine the topological Abel--Jacobi map $ \opname{a}_{X,t}^{\opname{top}}$ for  all smooth hyperplane sections together, we consider the incidence variety
  $$X_\L=\{ (x,t)\in X\times \L: x\in X_t\}\cong\opname{Bl}_BX\footnote{$\opname{Bl}_BX$ denotes the blow-up of $X$ along $B$.} =X,$$
 where the second equality is due to  $B=\emptyset$ because the base locus $B$ of a Lefschetz pencil is of codimension 2.
Therefore, we have a projection map $\pi: X \to \L$ which is a branched covering map of degree $d$ with   branch locus $\L^{\sg}=\L\setminus \L^{\sm}$.

Now we consider the local system $\opname{R}^{0}_{\van}\pi_{*}^{\sm}\Z$ over $\L^{\sm}$ which are
fiberwise given by $H^{0}_{\van}(X_{t}, \Z)$ and denote the total space by $ \big|\opname{R}^{0}_{\van}\pi_{*}^{\sm}\Z\big| $,  % which is an infinite-sheeted covering space of $\O_t^{\sm}$,
 then the topological Abel--Jacobi mapping over the local system $\opname{R}^{0}_{\van}\pi_{*}^{\sm}\Z$ becomes
 $$ \opname{a}_X^{\opname{top}}: \big|\opname{R}^{0}_{\van}\pi_{*}^{\sm}\Z\big| \to J(X).$$

 Pick   $\alpha\in H_{0}^{\van}(X_{0}, \Z)$, we consider the induced topological Abel--Jacobi homomorphism on fundamental groups
   $$ \opname{a}_{X*}^{\opname{top}}: \pi_1\left(\big|\opname{R}^{0}_{\van}\pi_{*}^{\sm}\Z\big|, (t_0, \alpha)\right) \to \pi_1(J(X))\cong  H_{1}(X, \Z),$$
where $\pi_1\left(\big|\opname{R}^{0}_{\van}\pi_{*}^{\sm}\Z\big|, (t_0, \alpha)\right)$ is exactly the stabilizer of $\alpha$ in $\pi_1(\L^{\sm}, t_0)$.

We now give a concrete description for $\opname{a}_{X*}^{\opname{top}}$ to relate with the tube mapping $\tau$.
  Since every hyperplane section $X_t$ over $\L$ has at most one ordinary double point as singularity, the ramification index of each branched point is at most 1.   By the Riemann--Hurwitz formula, we have
$$2g-2=d(-2)+\deg \L^{\sg},$$
 then $\L^{\sg}$ is of even degree in $\L$, %$\Sigma_t$ is of even degree,
%$$\deg (\mathscr{O}_{S_t}\otimes K_S) = -2 d +\deg \Sigma_t,$$
 thus we can make a double cover $\sigma: \tilde\L\to \L$ branched along $\L^{\sg}$. %Denote by
 %By Hurwitz formula, $2g(\widetilde\L_t)-2=2(-2)+\deg \Sigma_t$, then $g(\widetilde\L_t)=\deg\Sigma_t/2-1=d+g-2$.
  Denote  by $\tilde X$ the normalization of the fiber product $X \times_{\L} \widetilde\L$.
 Since the monodromy action $\rho_{\van}$ is of order at most $d!$, there is a finite unbranched covering $\rho: \F\to \tilde\L$ such that  the pull back of  $\tilde X$ over $\F$ is trivial, which implies that all vanishing cycles are stable under the monodromy action $\rho_{\van}$. Therefore, $\F$ is the deformation space for any elementary vanishing cycle $P_i-P_j$, then $\pi_1(\F, s_0)=G_{ij}$  the stabilizer of $P_i-P_j$ for all $i\neq j$, where $s_0\in \M$ such that $\sigma\circ\rho(s_0)=t_0 \in \L^{\sm}$.
We have the following commutative diagram
 %Note that $\tau: \M \to \L$ is a finite covering map branched at $\Sigma$ such that the pullback of $X$ over $\M$ is $d$ copies of $\M$.
\begin{equation*}
\xymatrix{{\M\times X_0}\ar[r]\ar[d]^{\tilde\pi_d}
&\tilde{X}\ar[r]\ar[d]& X\ar[d]^\pi \ar@{^{(}->}[r]^{\opname{a}_X}& J(X)\\
\M\ar[r]^\rho \ar@{.>}[urr]|->>>>>{\phi_i}\ar@{.>}[urrr]|->>>>{\varphi_i}& \tilde{\L}\ar[r]^\sigma& \L}.
\end{equation*}
%\begin{equation*}
%\xymatrix{{\M\times X_0}\ar[r]\ar[d]^{\tilde\pi_d}& X\ar[d]^<<\pi \ar[r]^{\opname{a}_X}& J(X)\\
%\M\ar[r]^\rho \ar@{.>}[ur]|-{\phi_i}\ar@{.>}[urr]|-{\varphi_i}& \L}.
%\end{equation*}
 By the lifting criterion, there are exactly $d$ covering maps $\phi_i: \M \to X$ with $\phi_i(s_0)=P_i$, $i=0,1, \cdots, d-1$. %And $\deg \phi_i=2\cdot d!/d=2 \cdot (d-1)!$.
 Since $\pi_1(\M, s_0)$ trivially acts on $H_0(X, \Z)$, \ie, $\gamma (P_i)=P_i$ for all $\gamma \in \pi_1(\M, s_0)$, we can form the tube class $\tau_\gamma (P_i) \in H_1(X, \Z)$ and $\phi_i$ induces a group homomorphism
\begin{equation*}
\begin{aligned}
\phi_{i*} : H_1(\M, \Z) &\to H_1(X, \Z)\\
\gamma &\mapsto \phi_i(\gamma)=\tau_\gamma(P_i).
\end{aligned}
\end{equation*}
Note that $H_1(\M, \Z)$ is the abelianization of $\pi_1(\F, s_0)=G_{ij}$, then $H_1(\M, \Z)\cong H_1(G_{ij}, \Z)$ for all $i\neq j$.
So $\phi_{ij*}=\phi_{i*}-\phi_{j*}$ is exactly the topological Abel--Jacobi homomorphism $\a_{X*}^{\opname{top}}$ with respect to the vanishing cycle $P_i-P_j$ and it is the same as the tube mapping $\tau$ by  definition.
%$\tau=\sum_{i=1}^{d-1}\phi_{ij*}$.
Denote by $\varphi_i=\opname{a}_X\circ \phi_i$, then $\varphi_{ij}=\opname{a}_X\circ\phi_{ij}$.  We can identify $\varphi_{ij*}$ with $\phi_{ij*}$ because $\opname{a}_X$ is an embedding.
%We shall prove that the tube mapping $\tau$ over the integral coefficient has finite cokernal.
So Theorem \ref{thm-main} is equivalent to the following theorem.
\begin{theorem}[=Theorem \ref{thm-main}]\label{thm-main-1}
 The image of the tube mapping
$$\tau=\sum\limits_{i=1}^{d-1}\varphi_{0i*} : \bigoplus\limits_{i=1}^{d-1}H_1(\F, \Z) %=\bigoplus\limits_{g\in  G^t } V_t^g
\to H_{1}(X,\Z)$$
is $m_dH_1(X,\Z)$, where $m_d$ is some nonzero integer only depending on the degree $d$.
\end{theorem}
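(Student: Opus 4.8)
The plan is to identify $V:=\mathrm{Im}\,\tau\subseteq H_1(X,\Z)$ as a nonzero subgroup that is invariant under a large group of symplectic automorphisms, and then to invoke the classification of such subgroups. Recall that $H_1(X,\Z)\cong\Z^{2g}$ carries the symplectic intersection form, and that the $\mathrm{Sp}(2g,\Z)$-invariant finite-index subgroups of $\Z^{2g}$ are \emph{precisely} the subgroups $m\Z^{2g}$, $m\ge 1$; this follows from the irreducibility of $\Q^{2g}$ under $\mathrm{Sp}(2g,\Q)$ together with the transitivity of $\mathrm{Sp}(2g,\Z/N)$ on primitive vectors of $(\Z/N)^{2g}$ for every $N$. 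Thus once I know that $V$ is nonzero, of finite index, and $\mathrm{Sp}(2g,\Z)$-invariant, I will be able to write $V=m_d H_1(X,\Z)$ with $m_d\neq 0$, and the only remaining point will be that the resulting $m_d$ depends on $d$ alone.

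First I would establish that $V$ has finite index, equivalently $V\otimes\Q=H_1(X,\Q)$. Every vanishing cycle produced at a node of a singular fibre $X_b$ with $b\in\L^{\sg}$ is the elementary cycle $P_i-P_j$ formed by the two colliding points, so Schnell's rational surjectivity of the tube mapping \cite[Theorem 1]{Tube} — equivalently the rational surjectivity of $\a_{X*}^{\opname{top}}$ in \cite{Fu} — is already witnessed using elementary vanishing cycles alone. Hence $V\otimes\Q=H_1(X,\Q)$ and $V$ is of finite index; in particular $V\neq 0$. Alternatively, once $\mathrm{Sp}(2g,\Z)$-invariance is in hand, $V\neq 0$ suffices, since $V\otimes\Q$ is then a nonzero $\mathrm{Sp}(2g,\Q)$-subrepresentation of the irreducible module $\Q^{2g}$ and must be everything.

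The main work is to establish the $\mathrm{Sp}(2g,\Z)$-invariance, and here the mapping class group enters. The branched cover $\pi\colon X\to\L\cong\P^1$ is simply branched of degree $d$ over the $r=2g+2d-2$ points of $\L^{\sg}$, and its monodromy $\pi_1(\L^{\sm},t_0)\to S_d$ sends each small loop around a branch point to a transposition; as $X$ is connected the image is a transitive subgroup of $S_d$ generated by transpositions, hence all of $S_d$. I would then place $\pi$ in the Hurwitz space $\H_{d,r}$ of degree-$d$ simply branched covers of $\P^1$ and extend the entire tube construction — the sections $\phi_i$, their trace cycles, and the subgroup they span — over $\H_{d,r}$. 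A loop in $\H_{d,r}$ based at $[\pi]$ then induces a self-homeomorphism of $X$, hence an element of $\mathrm{Sp}(2g,\Z)$ acting on $H_1(X,\Z)$; this is the homological monodromy of the Hurwitz space, realized concretely by lifting spherical braids of the branch points. The crucial naturality observation is that such a loop carries the trace cycle $\phi_i(\gamma)-\phi_j(\gamma)$ attached to $(\gamma,P_i-P_j)$ to another trace cycle attached to an elementary vanishing cycle, so the full span $V$ is preserved. Thus $V$ is invariant under the whole image of the Hurwitz monodromy in $\mathrm{Sp}(2g,\Z)$, and the external input I would invoke — the degree-$d$ analogue of A'Campo's theorem for the case $d=2$, following from the connectivity of $\H_{d,r}$ together with the computation of its symplectic monodromy — is that this image is all of $\mathrm{Sp}(2g,\Z)$. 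Combined with the previous paragraph, this forces $V=m_d H_1(X,\Z)$.

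Finally, to see that $m_d$ depends only on $d$ and not on $X$ or its genus, I would use that the index $m_d^{2g}=[H_1(X,\Z):V]$ is locally constant as the cover varies in the connected Hurwitz space, and that adjoining a pair of branch points (which raises $g$ by one) is compatible with the tube construction, so that $m_d$ is governed by the local behaviour at a single node and sees only the covering degree $d$. I expect the two genuine obstacles to be: first, making the naturality of the tube class under the braid lifts precise — one must track the relabelling of sheets and verify that trace cycles really are permuted among themselves; and second, the invocation that the homological Hurwitz monodromy fills $\mathrm{Sp}(2g,\Z)$. The latter is the hard part, and it is exactly where the Lefschetz hypothesis on the pencil is essential, since it guarantees that all branching is simple and that every local monodromy is a transposition.
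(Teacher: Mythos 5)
The load-bearing step of your argument --- that the homological monodromy of the Hurwitz space fills $\mathrm{Sp}(2g,\Z)$ --- is exactly the step you leave unproved, and the anchor you offer for it is misstated. A'Campo's theorem for $d=2$ yields a \emph{proper} subgroup of $\mathrm{Sp}(2g,\Z)$ once $g\geq 3$: the hyperelliptic monodromy acts on $H_1(X,\Z/2)$ through the permutation action of $S_{2g+2}$ on differences of Weierstrass points (the $2$-torsion of the Jacobian is generated by such differences, and monodromy permutes branch points), and $S_{2g+2}$ is a proper subgroup of $\mathrm{Sp}(2g,\Z/2)$ by cardinality, so the integral image cannot be everything. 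This is not a cosmetic slip, because your classification step genuinely needs full surjectivity (or at least surjectivity mod every prime): a merely finite-index image does not pin down the invariant submodules --- for instance the level-$2$ congruence subgroup preserves $\Z e_1+2\Z^{2g}$, which is not of the form $m\Z^{2g}$. For $d\geq 3$ the surjectivity you want is in Birman--Hilden/Birman--Wajnryb territory (liftable braid monodromy of simple branched covers), but you neither prove it nor cite it precisely, and you yourself flag it as ``the hard part''; as written this is a gap, not a proof. The paper sidesteps the issue entirely: it obtains invariance of $V=\Im\tau$ under the \emph{full} mapping class group by showing $V$ is independent of every auxiliary choice --- of the pencil by \cite[Lemma 4.1]{Fu}, and of the degree-$d$ line bundle via the Poincar\'e bundle, Mumford's base change theorem, and the resulting local topological triviality of the incidence bundle $\mathfrak{P}\to\Pic^d(X)$ --- so that any orientation-preserving diffeomorphism transports the construction to itself; the classification of $\Mod(X)$-invariant submodules is then an elementary Dehn-twist (transvection) computation requiring no theorem about monodromy images.

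Your finite-index step also rests on an unjustified claim: that Schnell's rational surjectivity ``is already witnessed using elementary vanishing cycles alone.'' Schnell's theorem \cite{Tube} quantifies over \emph{all} monodromy-invariant classes $D\in H_0^{\van}(X_{t_0},\Z)$ with their own stabilizers; a class fixed by $\gamma$ need not be fixed summand-by-summand, so the tube class of $(\gamma,D)$ does not obviously decompose into tube classes of elementary cycles --- reducing to elementary cycles is essentially the content of the theorem you are proving. Your fallback (invariance plus $V\neq 0$) is indeed the paper's route, but $V\neq 0$ is not free either: the purely local tube class around a single critical value vanishes (under the square of the local loop the two colliding sheets trace the same circle, so their difference is nullhomologous), hence nonvanishing requires a global argument, supplied in the paper by \cite[Lemma 4.1]{Fu}. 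Finally, your closing stabilization argument that $m_d$ is independent of the genus is hand-waved; note that the paper fixes $g$ throughout, and its proof delivers independence of the complex structure, the line bundle, and the pencil for the given topological surface, which is all the stated theorem needs.
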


\section{Proof of the main theorm}
% In what follows, we denote by $V_{(X, L,\L)}$ the image of $\tau$.
Consider the mapping class group of $X$, which is the group of isotopy (or homotopy) classes of orientation-preserving diffeomorphisms of $X$, denoted by
$$\Mod(X)=\opname{Diff}^+(X)/\sim,$$
where $\opname{Diff}^+(X)$ is the group of orientation-preserving diffeomorphisms of $X$ and $\sim$ can be taken to be either smooth homotopy or smooth isotopy.
Note that any element $\phi \in \opname{Diff}^+(X)$ induces an automorphism $\phi_* : H_1(X, \Z) \to H_1(X, \Z)$ which preserves the intersection form $<\cdot, \cdot>$. As homotopic diffeomorphisms $\phi \sim \psi$ induce the same map $\phi_*=\psi_*$, there is a representation
$$ \rho : \Mod(X) \to \opname{Aut}(H_1(X, \Z), <\cdot, \cdot>)\cong \opname{Sp}_g(\Z),$$
where $\opname{Sp}_g(\Z)$ denotes the symplectic group of degree $g$ over $\Z$.
We have the following conclusion about this mapping class group action, which will be proved in Section \ref{s:stable-mcg}.
\begin{lemma}\label{thm-stable-mcg}
The invariant $\Z$-submodules of $H_1(X,\Z)$ under the representation
$\rho$
are $$mH_1(X,\Z), \qquad m\in \Z.$$
\end{lemma}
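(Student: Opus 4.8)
The plan is to reduce the statement to an arithmetic fact about the standard symplectic lattice and then settle that by an elementary minimal content argument. First I would use the classical surjectivity of the symplectic representation of the mapping class group, namely that $\rho\colon \Mod(X)\to \opname{Sp}_g(\Z)$ is onto (Dehn twists along suitable simple closed curves realize a generating set of $\opname{Sp}_g(\Z)$). This lets me replace invariance under $\rho(\Mod(X))$ by invariance under the full group $\opname{Sp}_g(\Z)$ acting on $H_1(X,\Z)\cong\Z^{2g}$ with its standard unimodular symplectic form. One inclusion is trivial, since $mH_1(X,\Z)$ is visibly stable under every $\Z$-linear automorphism; the work is to show these are the only invariant submodules.

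The key input is that $\opname{Sp}_g(\Z)$ acts transitively on the primitive vectors of $\Z^{2g}$. I would deduce this from the standard fact that any primitive vector $v$ completes to a symplectic basis: unimodularity of the form gives $w$ with $\langle v,w\rangle=1$, the plane they span is a unimodular hyperbolic summand, and an induction on the rank produces the remaining basis vectors; two symplectic bases differ by an element of $\opname{Sp}_g(\Z)$. Since every nonzero $u\in\Z^{2g}$ factors uniquely as $u=k\,v$ with $v$ primitive and $k\ge 1$ the gcd of its coordinates (its content), it follows that the $\opname{Sp}_g(\Z)$-orbits on $\Z^{2g}\setminus\{0\}$ are exactly the sets of vectors of a fixed content.

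Given a nonzero invariant submodule $M$, let $m\ge 1$ be the smallest content occurring among nonzero elements of $M$. Transitivity shows that $M$ contains the entire content-$m$ orbit, in particular each $m\,e_j$, so $M\supseteq m\Z^{2g}$. For the reverse inclusion I would argue by contradiction. Note that $u\in m\Z^{2g}$ if and only if $m$ divides the content of $u$; so if $M\not\subseteq m\Z^{2g}$ there is $u\in M$ whose content $c$ satisfies $d:=\gcd(c,m)<m$. Carrying the primitive part of $u$ to $e_1$ by a symplectic transformation puts $c\,e_1\in M$, and likewise $m\,e_1\in M$; a B\'ezout combination of these yields $d\,e_1\in M$, a nonzero vector of content $d<m$, contradicting the minimality of $m$. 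Hence $M=m\Z^{2g}$, which together with the trivial case $M=0$ gives the Lemma.

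I expect the two genuinely structural facts, surjectivity of $\rho$ and the completion of a primitive vector to a symplectic basis, to be the main obstacle if one wants a self-contained proof, though both are classical and can simply be cited; the arithmetic core built on transitivity, minimal content and B\'ezout is elementary. The points requiring care are the normalization conventions ($g\ge 1$, so $\Z^{2g}\ne 0$, and the meaning of $\opname{Sp}_g$) and the equivalence ``$u\in m\Z^{2g}$ iff $m$ divides the gcd of the coordinates of $u$'', on which the content argument rests.
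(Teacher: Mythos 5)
Your proof is correct, but it follows a genuinely different route from the paper. You black-box two classical facts---surjectivity of $\rho\colon \Mod(X)\to\opname{Sp}_g(\Z)$ and transitivity of $\opname{Sp}_g(\Z)$ on primitive vectors (via completion of a primitive vector to a symplectic basis)---to reduce the lemma to classifying $\opname{Sp}_g(\Z)$-invariant submodules of $\Z^{2g}$, which you then settle by the orbit-equals-fixed-content observation, a minimal-content choice of $m$, and a single B\'ezout contradiction to get $M=m\Z^{2g}$ in one stroke. The paper never invokes surjectivity of $\rho$ or orbit transitivity: it works entirely with explicit transvections, i.e.\ the Dehn twist formula $T_\beta^k(\alpha)=\alpha+k\langle\alpha,\beta\rangle\beta$ along the primitive classes $\delta_i,\gamma_i,\delta_i+\gamma_j,\delta_k+\delta_l,\gamma_k+\gamma_l$, computing directly that a class $\alpha=\sum(a_i\delta_i+b_i\gamma_i)\in V$ forces $a_kH_1(X,\Z)\subseteq V$ and $b_kH_1(X,\Z)\subseteq V$, hence $d_\alpha H_1(X,\Z)\subseteq V$ by B\'ezout, and then runs an iterated gcd-descent over elements of $V$ to terminate at $V=mH_1(X,\Z)$. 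What your approach buys is brevity and conceptual clarity: the minimal-content argument replaces the paper's descent loop (whose termination must be argued separately) and identifies $m$ intrinsically as the minimal content occurring in $M$, at the cost of citing two structural theorems. What the paper's approach buys is self-containedness within the tools it has already set up (Lemmas on the twist formula and on primitivity), using only finitely many explicitly exhibited elements of the image of $\rho$ rather than the full symplectic group. The delicate points you flag---$g\geq 1$, unimodularity for the hyperbolic splitting, and the equivalence ``$u\in m\Z^{2g}$ iff $m$ divides the content of $u$''---are exactly the right ones, and your handling of them is sound.
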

Note that $\tau$ is not zero by \cite[Lemma 4.1]{Fu},
 our main theorem (\ie, Theorem \ref{thm-main-1}) is directly obtained from this lemma if we could show $\Im \tau:=V_{(X, L,\L)}$ is stable under the mapping class group action $\rho$.  That is to say, we only need to show the following lemma.  %\ie, $f_*(V_{(L,\L)})=V_{(L,\L)}$ for all $f\in \opname{Diff}^+(X)$.
\begin{lemma}\label{thm-prim-diff}
The $\Z$-submodule $V_{(X, L,\L)}$ is invariant under any orientation-preserving diffeomorphism, \ie, $f_*(V_{(X, L,\L)})=V_{(X, L,\L)}$ for all $f\in \opname{Diff}^+(X)$.
\end{lemma}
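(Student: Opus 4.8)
The plan is to deduce $\opname{Diff}^+$-invariance from a stronger \emph{deformation invariance} of the tube image: I claim $V_{(X,L,\L)}$ depends neither on the Lefschetz pencil $\L$, nor on the degree-$d$ very ample bundle $L$, nor even on the complex structure carried by the fixed underlying oriented surface. Throughout I fix the smooth oriented surface $M$ underlying $X$ and write $X=(M,J)$; this is essential, since then every tube class lives in the single group $H_1(M,\Z)$ and the phrase ``a tube class varies continuously'' is literally meaningful. Granting the two independence statements below, the lemma becomes formal.

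\textbf{Step 1 (independence of $(L,\L)$ at a fixed $J$).} For fixed $J$ the admissible data $(L,\L)$ --- a very ample $L$ of degree $d$, which is automatic as $d\ge 2g+1$, together with a Lefschetz pencil $\L\subset|L|$ --- form a connected parameter space: $\Pic^d(X)$ is a connected torus, and over each $L$ the Lefschetz pencils are the open dense complement of the dual discriminant inside the Grassmannian of lines in $|L|$, hence connected. Over a path in this space the branched covers $\pi\colon M\to\P^1$, the double cover $\tilde\L$, the trivializing cover $\F$, the stabilizers $G_{ij}$ and the lifts $\phi_i$ fit into continuous families, so each generating tube class $\phi_{i*}(\gamma)-\phi_{j*}(\gamma)$ traces a continuous path in the \emph{discrete} group $H_1(M,\Z)$ and is therefore constant. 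Thus $V_{(X,L,\L)}=:V_{(M,J)}$ is well defined.

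\textbf{Step 2 (independence of $J$), and conclusion.} The space $\mathcal J(M)$ of orientation-compatible complex structures on $M$ is contractible --- in real dimension two integrability is automatic, and the fibrewise model is the contractible space of orientation-compatible linear complex structures on $\R^2$ --- hence in particular connected. Given $f\in\opname{Diff}^+(M)$, let $f^*J:=df^{-1}\circ J\circ df$, so that $f\colon(M,f^*J)\to(M,J)$ is a biholomorphism. I would join $J$ to $f^*J$ by a path $J_s$ in $\mathcal J(M)$ and, using Step 1 fibrewise, choose continuously varying admissible data $(L_s,\L_s)$ along it; exactly as before the tube classes stay constant in $H_1(M,\Z)$, so $V_{(M,J)}=V_{(M,f^*J)}=:V$ is independent of $J$. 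Finally, because $f$ is a biholomorphism it carries the entire $(M,f^*J)$-construction (bundle, pencil, branched cover, and all lifts $\phi_i$) to the $(M,J)$-construction, and as a homeomorphism the induced $f_*$ matches the two families of tube classes; hence $f_*\bigl(V_{(M,f^*J)}\bigr)=V_{(M,J)}$, i.e. $f_*(V)=V$, which is Lemma \ref{thm-prim-diff}.

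The main obstacle is making the two local-constancy assertions rigorous, namely organizing not only the branched covers but also the transcendental auxiliaries ($\tilde\L$, $\F$, the stabilizers $G_{ij}$, and a chosen set of generators of $H_1(\F,\Z)$) into genuine continuous families over the $(L,\L)$-parameter space and over the path $J_s$, and checking that the resulting tube classes assemble into a section of a local system with fibre $H_1(M,\Z)$, so that discreteness forces them to be locally constant. The technical pressure points are the openness of the very ample and Lefschetz conditions --- so that admissible data survive along the chosen paths --- and the continuity of the point-dragging lifts $\phi_i$ and of the induced maps $\phi_{i*}$; once these are secured, connectedness of the parameter spaces finishes Steps 1--2 and the naturality in Step 2 is purely formal.
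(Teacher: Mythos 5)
Your overall strategy is sound and, in its final step, genuinely different from the paper's. Your Step 1 is the same idea as the paper's Claim: the paper also argues connectedness-plus-discreteness, by showing that the map $\eta_{ij}\colon \Pic^d(X)\to \opname{M}_{2g}(\Z)/\GL_{2g}(\Z)$, $L\mapsto \Im\varphi_{ij*}^{L}$, is continuous, and by quoting \cite[Lemma 4.1]{Fu} for independence of the pencil $\L$. But for diffeomorphism invariance the paper does not deform the complex structure at all: given $f\colon X\to Y$ it sets $i_{f^*L}:=i_L\circ f$, observes $i_{f^*L}(X)=i_L(Y)$ as curves in $\P^N$, and reduces everything to the $L$-independence claim; you instead pass through the contractible space $\mathcal{J}(M)$ of complex structures and transport the construction along a path from $J$ to $f^*J$. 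Your route has the merit of making explicit the change-of-complex-structure point that the paper compresses into the assertion $i_{f^*L*}(V_X)=V_{i_{f^*L}(X)}$ (indeed $f^*L$, with its pulled-back holomorphic structure, is naturally a line bundle on $(M,f^*J)$ rather than an element of $\Pic^d(X)$), and your concluding logic --- naturality $f_*(V_{(M,f^*J)})=V_{(M,J)}$ under the biholomorphism $f$, combined with deformation invariance $V_{(M,f^*J)}=V_{(M,J)}$ inside the single group $H_1(M,\Z)$ --- is correct.

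The gap is that both local-constancy assertions, which you yourself flag as ``the main obstacle,'' are asserted rather than proven, and they are not routine: the paper's entire technical content is a proof of exactly one of them. It constructs the incidence variety $\mathfrak{P}\subset\P^N\times\Pic^d(X)$ and proves local topological triviality by showing (Lemma \ref{lem-vb}, via the Poincar\'e line bundle and the Base Change Theorem, using $d\geq 2g-1$) that the pushforward of the Poincar\'e bundle is a vector bundle, whose local frames $\sigma_0,\cdots,\sigma_N$ yield explicit trivializations $\zeta_{U_L}$ of $\mathfrak{P}$; continuity of $\eta_{ij}$, hence constancy, then follows. Your plan needs this and strictly more: Step 2 requires a family version over a path $J_s$ in $\mathcal{J}(M)$ --- a universal curve, a relative $\Pic^d$ with a continuous section $L_s$, and a continuous choice of Lefschetz pencils $\L_s$ (fiberwise openness and density of the Lefschetz condition does not by itself produce a continuous selection; you must build one locally and patch over the interval) --- together with local constancy of the covering data ($\tilde{\L}_s$, $\F_s$, the lifts $\phi_i^s$) as the critical values $\L_s^{\sg}$ move. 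None of this is supplied, so as written the proposal is a viable plan rather than a proof; completing it along your lines would mean redoing the paper's Poincar\'e-bundle argument in a relative setting, a heavier burden than the paper's device of comparing the two embeddings $i_{f^*L}$ and $i_L$ with the same image.
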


%In the future, we will explore the relation between $m_d$ and $d$ to see whether we can show $\Im \tau= H_1(X,\Z)$ by constructing two cycles with intersection number having no square factors.

%%%%%%%%%%%%%%%%%%%%%%%%%%%%%%%%%%%%%%%%%%%%%%%%%%%

%\subsection{Proof of Lemma \ref{thm-prim-diff}}
%Stability under orientation-preserving diffeomorphisms}\label{s:stable-mcg}
\begin{proof}
First of all, since $V_{(X, L, \L)}$ is independent of the choice of the Lefschetz pencil $\L$ by \cite[Lemma 4.1]{Fu},  we can denote $V_{(X, L, \L)}$ by $V_{(X,L)}$.
Furthermore, we claim that
%\item $V_{(X, L, \L)}$ is independent of the choice of the Lefschetz pencil $\L$, thus we can denote by $V_{(X,L)}$, which is true by Theorem \ref{thm-top-2};  %in fact, we proved $\im \varphi_{ij*}^{\L}$ is independent of the choice of $\L$;
\begin{Claim}
$V_{(X, L)}$ is independent of the choice of the degree $d$ line bundle $L$.
%(In fact, we prove $\im \varphi_{ij*}^L$ is independent of the choice of $L$ in Section \ref{ss: IndLbd}).}
\end{Claim}
 \noindent With this claim, $V_{(X, L)}$ can be denoted by $V_{(X, d)}$. % and is stable under orientation-preserving diffeomorphisms.
  %Indeed,  if $V_{(X, L)}$ does not depend on the choice of the Lefschetz pencil $\L$ and the line bundle $L$ of degree $d$, we can denote by $V_{(X,d)}:=V_{(X, L)}$.
  Since $d$ is fixed,
   we denote by $V_X:=V_{(X, d)}$ in what follows.

  Let $f : X \to Y$ be an orientation-preserving diffeomorphism between two closed Riemann surfaces of genus $g$. We only need to show $f_*(V_X)=V_Y$. % and $d\geq 2g+1$ is a fixed positive integer,
%Note that $f_*L\in \Pic^d(Y)$ for any $L\in \Pic^d(X)$. Therefore, we need to show $V_{(Y, f_*L)}=V_{(X, L)}$.

Pick any $L\in \Pic^d(Y)$, we have $f^*L\in \Pic^d(X)$.
Denote by $i_L : Y \hookrightarrow \P^N$  the closed embedding induced by $L$,  then $f^*L$ induces a closed embedding $i_{f^*L}: X \hookrightarrow \P^N$.
%By the construction of
Since $V_X$ and  $V_Y$ contain the tube classes of all elementary vanishing cycles, we have $i_{f^*L*}: V_X \xrightarrow{\sim} V_{i_{f^*L}(X)}$ and $i_{L*}: V_Y \xrightarrow{\sim} V_{i_L(Y)}$ such that the following diagram commutes
$$\xymatrix{X \ar[r]^f\ar@{^{(}->}[rd]_{i_{f^*L}} & Y \ar@{^{(}->}[d]^{i_L} \\ & \P^N}.$$
We obtain $i_L(Y)=i_{f^*L}(X)$. %By the construction of $V_X$ and $V_Y$, we have
Since $V_{i_L(Y)}=V_{i_{f^*L}(X)}$,  we have
$$i_{f^*L*}(V_X)=V_{i_{f^*L}(X)}=V_{i_L(Y)}=i_{L*}(V_Y).$$
Note that $i_{f^*L}=i_L\circ f$, then $i_{f^*L*}=(i_L\circ f)_*=i_{L*}\circ f_*$, which gives us $i_{L*}\circ f_*(V_X)=i_{L*}(V_Y)$. Since $i_{L}$ is an embedding and $i_{L*}|_{V_Y}$ is  injective,  we have $f_*(V_X)=V_Y$.

\medskip

%%%%%%%%%%%%%%%%%%%%%%%%%%%%%%%%%%%%%%%%%%%%%

%%%%%%%%%%%%%%%%%%%%%%%%%%%%%%%%%%%%%%%%%%%%%%%%%%%

\noindent{\bf Proof of Claim.}
%{Independent of the choice of line bundle of fixed degree $d\geq 2g+1$}\label{ss: IndLbd}
Note that $$V_{(X,L)}=\sum\limits_{i=1}^{d-1}\Im\varphi_{0i*}^L,$$
we only need to prove $\Im \varphi_{ij*}^L$ is independent of the choice of $L$.
%Note that the image of $\varphi_{ij*}^{\mathbb{L}}$ is independent of the choice of the Lefschetz pencil $\L$ from \cite[Lemma 4.1]{Fu}, we can denote $\im \varphi_{ij*}^{\mathbb{L}} = \im \varphi_{ij*}^L$ and define
Let
\begin{equation*}
\begin{aligned}
\eta_{ij}:  \opname{Pic}^{d}(X) &\to  \opname{M}_{2g}(\Z)/{\GL_{2g}(\Z)}\\
L &\mapsto\eta_{ij}({L})=\Im \varphi_{ij*}^{{L}}.
\end{aligned}
\end{equation*}
Clearly, $\Im \varphi_{ij*}^{{L}}$ does not depend on the choice of the line bundle $L$ of degree $d$ if and only if $\eta_{ij}$ is a constant map.  Let $\opname{Pic}^d(X)$ be  the set of degree $d$ line bundles on $X$, which is a compact complex torus due to the following isomorphisms
 $$\opname{Pic}^d(X)  \xrightarrow[\sim]{-\otimes \mathscr{O}(dP)}  \opname{Pic}^0(X) \xrightarrow[\sim]{\a_X} J(X),$$%by the following isomorphism
%$$\opname{Pic}^0(X) \xrightarrow{-\otimes \mathscr{O}(dP)} \opname{Pic}^d(X)\hskip 20pt \text{and} \hskip 20pt\opname{Pic}^0(X) \xleftarrow{-\otimes \mathscr{O}(-dP)} \opname{Pic}^d(X),$
where $P\in X$. Clearly, $\opname{Pic}^d(X)$  is path connected and $\opname{M}_{2g}(\Z)/{\GL_{2g}(\Z)}$ is discrete,  % and thus connected,
so $\eta_{ij}$ is constant if and only if it is continuous.  %Therefore, we only need to show $\eta_{ij}$ is continuous.

For any $L\in \opname{Pic}^d(X)$, since $d\geq 2g+1$,  there is a closed embedding $i_L : X \to \P^N$ %with $N=\ell(L)-1=d-g$
induced by $L$. Consider the following incidence variety
$$\mathfrak{P} : = \left\{ (x, L) \in \P^N\times \Pic^d(X) \, | \, x \in i_L(X)\right\}
%=\bigcup\limits_{L \in \opname{Pic}^d(X) } i_L(X)
\xrightarrow{\opname{q}} \opname{Pic}^d(X),$$
 which is a fiber bundle over $\opname{Pic}^d(X)$ with fiber $\opname{q}^{-1}(L)=i_L(X) $.  %, where $ \pi:\mathfrak{P} \rightarrow \opname{Pic}^d(X)$.
 If we can show $\mathfrak{P}$ is locally topological trivial over $\opname{Pic}^d(X)$, \ie, for any $L\in \opname{Pic}^d(X)$, there is an open neighborhood $U_L$ of $L$ such that $\mathfrak{P}|_{U_L} \cong U_L\times i_L(X)$,  then
  $i_{L'}(X)$ continuously goes to $i_L(X)$ when $L'$ continuously goes to $L$ in $U_L$.  Hence,
$\eta_{ij}({L}')=\im \varphi_{ij*}^{{L}'}$ continuously goes to $\eta_{ij}({L})=\im \varphi_{ij*}^{{L}} $, which means $\eta_{ij}$ is continuous at $L$. Since $L$ is arbitrary,  $\eta_{ij}$ is continuous.
 % Consequently, we only need to show  $\mathfrak{P}$ is locally topological trivial over $\opname{Pic}^d(X)$.

 Consider the \emph{Poincar\'e line bundle} $\mathscr{L}$ of degree  $d$ on $X$, which means a line bundle $\mathscr{L}$ on $X \times \opname{Pic}^d(X)$ such that  $e_*(\mathscr{L}|_{X\times [D]} )\cong \mathcal{O}(D)$ for each point  $[D]\in \opname{Pic}^d(X)$, where $ e: X\times [D] \xrightarrow{\sim} X$.
Denote by $\p : X \times \opname{Pic}^d(X) \to  \opname{Pic}^d(X)$.  As we know, $\p_*\mathscr{L}$ is not necessarily a vector bundle on $\opname{Pic}^d(X)$, but we will see it is a vector bundle for $d\geq 2g-1$.

\begin{lemma}\label{lem-vb}
If $d\geq 2g-1$, then $\p_*\mathscr{L}$ is a vector bundle on $\opname{Pic}^d(X)$ with fibers $(\p_*\mathscr{L} )_{[D]} \cong H^0(X,\mathcal{O}(D))$.
\end{lemma}

\begin{proof}
Denote by $K$ the canonical divisor on $X$, then $\deg K=2g-2$.
Since $|D|\cong (\Gamma(X, \mathcal{O}(D))-0)/{\C^*}$,  we have $\dim |D| =\ell(D)-1$\footnote{Denote by $\ell(D)=\dim H^0(X, \mathcal{O}(D))$.}.

If $\deg D \geq 2g-1$, then $\deg(K-D)<0$. There is not a positive divisor linearly equivalent to $K-D$, thus $\Gamma(X, \mathcal{O}(K-D))=0$ and then $\ell(K-D)=0$.
By the Riemann--Roch Theorem, $\ell(D)-\ell(K-D)=d+1-g$, then $\ell(D)=d+1-g$, which does not depend on the choice of $[D]$ in $\opname{Pic}^d(X)$, which gives us a constant function on $\opname{Pic}^d(X)$ defined by $[D] \mapsto \ell(D)$.
Now we need to use the Base Change Theorem  \cite[Corollary 2 on p. 50]{Mumford-AV}.
%and the proof can be found in Mumford's Abelian Varieties, Chapter II:
\begin{Base Change Theorem}
Let $f: X \to Y$ be a proper morphism of Noetherian schemes with $Y$ reduced and connected, and $E$ a coherent sheaf on $X$, flat over $Y$. Then for all integers $k\geq 0$, the following conditions are equivalent:
\begin{enumerate}
\item[(1)] $y\mapsto \dim H^k(X_y, E_y)$ is a constant function, where $X_y:=f^{-1}(y)$ and $E_y:=E|_{X_y}$.
\item[(2)] $R^kf_*E$ is a locally free sheaf on $Y$ and the natural map
$R^kf_*E \otimes_{\mathcal{O}_Y}k(y) \to H^k(X_y, E_y)$
is an isomorphism for all $y\in Y$.
\end{enumerate}
\end{Base Change Theorem}

Apply this theorem to $\p : X \times \opname{Pic}^d(X) \to  \opname{Pic}^d(X)$, we have $\p^{-1}([D])=X \times [D]\cong X$ and $\mathscr{L}|_{X\times [D]} \cong e^*\mathcal{O}(D)$. Furthermore,
 if $[D] \mapsto \dim H^0(X, \mathcal{O}(D)):=\ell(D)$ is a constant function on $\opname{Pic}^d(X)$,  then $\p_*\mathscr{L}$ is a vector bundle on $\opname{Pic}^d(X)$ with fibers $(\p_*\mathscr{L} )_{[D]} \cong H^0(X,\mathcal{O}(D))$, which is desired.
%Therefore,  $\pi_*\mathcal{L}$ is a vector bundle on $\opname{Pic}^d(X)$ with fibers $(\pi_*\mathcal{L} )_{[D]} \cong H^0(X,\mathscr{O}(D))$.
\end{proof}

%Note that the natural map $ X^{(d)}  \to \opname{Pic}^d(X)$ has fibers $X^{(d)} _{[D]}=|D|\cong (H^0(X, \mathscr{O}(D))-0)/{\C^*} =\mathbb{P}(\pi_*\mathcal{L} )_{[D]} $, then $X^{(d)}=\mathbb{P}(\pi_*\mathcal{L})$. Where $X^{(d)}$ is the $d$-th symmetric product of $X$.

We now have the following commutative diagram
$$
\xymatrix{\mathscr{L}\ar[d]& \pr_{2*}\mathscr{L}\ar[d]\\
X\times \Pic^d(X) \ar@{_{(}->}[d]\ar[r]^{\pr_2}& \Pic^d(X)\\
\mathfrak{P} \ar@{_{(}->}[r]\ar[ru]^\p& \P^N\times \Pic^d(X)}.
$$
%where $\mathscr{L}$ denotes the Poincar\'e line bundle over $X\times \Pic^d(X)$.     % then  $e_*(\mathscr{L}|_{X\times L})\cong L$ with $e : X\times L \xrightarrow{\sim} X $.
Since $d\geq 2g+1$,  we have that $\opname{pr}_{2*}\mathscr{L}$ is a vector bundle over  $\opname{Pic}^d(X)$ with fibers $(\pr_{2*}\mathscr{L} )_{L} \cong H^0(X, L)$ by Lemma \ref{lem-vb}.  There is a trivialization for $\opname{pr}_{2*}\mathscr{L}$, \ie, for each $L\in \opname{Pic}^d(X)$, there is an open neighborhood $U_L$ of $L$ in $\opname{Pic}^d(X)$ with a biholomorphism $\xi_{U_L} : \pr_2^{-1}(U_L) \xrightarrow{\sim}  \C^{N+1} \times U_L$. Denote by $\sigma_i(L')=\xi_{U_L}^{-1}(L', e_{i})$ for $L'\in U_L$, where $\{ e_0, \cdots, e_N\}$ is the canonical basis of $\C^{N+1}$, then the holomorphic sections $\sigma_0, \cdots, \sigma_N$ of $\pr_{2*}\mathscr{L}$ over $U_L$ form a frame, \ie, $\{\sigma_0(L'), \cdots, \sigma_N(L')\}$ is a basis of $(\pr_{2*}\mathscr{L})_{L'}=H^0(X, L')$. Consequently, $i_{L'}$ with $L'\in U_L$ can be written as
\begin{equation*}
\begin{aligned}
i_{L'} : X &\xrightarrow{\sim} i_{L'}(X) \hookrightarrow \P^N\\
x &\mapsto [\sigma_0(L')(x), \cdots, \sigma_N(L')(x)]=i_{L'}(x),
\end{aligned}
\end{equation*}
which gives us a trivialization of $\mathfrak{P}$ over $\Pic^d(X)$
\begin{equation*}
\begin{aligned}
\zeta_{U_L} : \p^{-1}(U_L) &\to i_L(X)\times U_L\\
\left( [\sigma_0(L')(x), \cdots, \sigma_N(L')(x)], L'\right)&\mapsto \left([\sigma_0(L)(x), \cdots, \sigma_N(L)(x)], L'\right).
\end{aligned}
\end{equation*}
Therefore, $\mathfrak{P}$ is locally topological trivial over $\opname{Pic}^d(X)$.
\end{proof}

\begin{remark}
From the proof, we see that it is necessary to use all elementary vanishing cycles, because the orientation-preserving diffeomorphism may convert one elementary vanishing cycle to the other, \ie, it will map $\im \varphi_{ij}$ to $\im \varphi_{k\ell}$. Therefore,  we cannot use only one elementary vanishing cycle to generate the whole homology group under the mapping class group action.
\end{remark}

%%%%%%%%%%%%%%%%%%%%%%%%%%%%%%%%%%%%%%%%%%%%%%%%%%%%%%%%%%%%%%%%%%%%%%

\subsection{Stable $\Z$-submodules under the mapping class group action}\label{s:stable-mcg}
Let $[c]$ be the homology class corresponding to an oriented simple closed curve $c$.  By a closed curve in a surface $X$, we mean a continuous map $S^1 \to X$.  A closed curve is simple if it is embedded, that is, if the map $S^1 \to X$ is injective.  A closed curve is multiple if the map $S^1 \to X$ factors through the map $S^1 \xrightarrow{\times n} S^1$ for some integer $n>1$.
Denote by $T_c$ the Dehn twist about $c$, where $c$ is the isotopy class of an oriented simple closed curve in $X$, then $T_c\in \Mod(X)$ and $\Mod(X)$ is generated by a finitely many of Dehn twists. We have the following formula for Dehn twists action {\cite[Proposition 6.3]{MCG}}.

\begin{lemma}\label{formula}
Let $a$ and $b$ be isotopy classes of oriented simple closed curves in $X$. For any integer $k\geq 0$, we have
\begin{equation*}
\rho(T_b^k)([a])=[a]+k<[a],[b]>[b]
\end{equation*}
\end{lemma}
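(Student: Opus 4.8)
The plan is to reduce the general statement to the single case $k=1$ and then establish that base case by a direct geometric analysis of the Dehn twist inside an annular neighborhood of $b$.

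For the reduction, I would first record that the intersection form is skew-symmetric, so $<[b],[b]>=0$; granting the base case, this yields $\rho(T_b)([b])=[b]+<[b],[b]>[b]=[b]$, that is, $[b]$ is fixed by $\rho(T_b)$. Since $\rho$ is a group homomorphism we have $\rho(T_b^k)=\rho(T_b)^k$, and $\rho(T_b)\in\opname{Aut}(H_1(X,\Z))$ acts $\Z$-linearly. I would then induct on $k$: assuming $\rho(T_b^{k-1})([a])=[a]+(k-1)<[a],[b]>[b]$, I apply $\rho(T_b)$ to both sides, use linearity together with $\rho(T_b)([b])=[b]$ and the base case $\rho(T_b)([a])=[a]+<[a],[b]>[b]$, and collect terms to obtain $\rho(T_b^k)([a])=[a]+k<[a],[b]>[b]$. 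The case $k=0$ is trivial since $\rho(T_b^0)=\opname{id}$.

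For the base case $k=1$, I would first isotope $a$ into minimal position with respect to $b$, so that the two oriented simple closed curves meet transversally in exactly $n$ points $p_1,\dots,p_n$ with local intersection signs $\epsilon_i\in\{\pm1\}$; by definition $<[a],[b]>=\sum_{i=1}^n\epsilon_i$. Next I would use that $T_b$ is supported in an annular neighborhood $A\cong S^1\times[0,1]$ of $b$ and is the identity outside $A$. Each of the $n$ subarcs of $a$ crossing $A$ is replaced by an arc that additionally winds once around the core $b$, in the direction dictated by $\epsilon_i$. Working at the level of $1$-chains, I would then argue that $T_b(a)$ is homologous to $a$ together with one copy of $b$ inserted at each crossing, weighted by $\epsilon_i$, so that $[T_b(a)]=[a]+\big(\sum_{i=1}^n\epsilon_i\big)[b]=[a]+<[a],[b]>[b]$.

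The main obstacle I expect is the sign bookkeeping in this last step: one must verify that the orientation of the inserted copy of $b$ agrees with $\epsilon_i$ rather than $-\epsilon_i$, which amounts to fixing once and for all the compatible conventions for the handedness of the Dehn twist, the orientation of the annulus $A$, and the intersection pairing $<\cdot,\cdot>$, and checking their interplay on a single model crossing in a standard annulus. Once the conventions are pinned down on one transverse crossing, the contribution at every other crossing follows through an orientation-preserving local chart, and the homological identity reduces to summing these local contributions.
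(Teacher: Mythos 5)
Your proof is correct and is essentially the argument behind the paper's treatment of this lemma: the paper gives no proof at all, simply invoking \cite[Proposition~6.3]{MCG}, and the standard proof there is exactly your annulus-model computation that each signed transverse crossing of $a$ with $b$ inserts a correspondingly signed copy of $[b]$ (your reduction to $k=1$ via $\rho(T_b^k)=\rho(T_b)^k$, linearity, and $\rho(T_b)([b])=[b]$, which uses $<[b],[b]>=0$, is a fine substitute for letting each strand wind $k$ times in the model, and minimal position is harmless though unnecessary since the algebraic count is transverse-representative independent). The one delicate point is the sign bookkeeping you explicitly flag, and pinning down the twist handedness, annulus orientation, and pairing convention on a single model crossing is precisely how it is handled in the cited reference.
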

By Lemma \ref{formula}, we get
$$\rho(T_{a})=\rho(T_b) \Leftrightarrow [a]=[b].$$
Hence, we can denote by $T_{[b]}:=\rho(T_b)\in \opname{Aut}(H_1(X, \Z), <\cdot, \cdot>)$ in what follows.

An element $g$ of a group $G$ is \textit{primitive} if there does not exist any $h\in G$ so that $g=h^k$ for some integer $k$ with $|k|>1$. Therefore, $\xi \in H_1(X, \Z)$ is primitive if $\xi\neq k \eta$ for any $\eta \in H_1(X, \Z)$ and any integer $k>1$.  The primitive elements in $H_1(X, \Z)$ can be described as follows \cite[Proposition 6.2]{MCG}.
\begin{lemma}\label{primitive}
A nonzero element of $H_1(X, \Z)$ is represented by an oriented simple closed curve if and only if it is primitive.
\end{lemma}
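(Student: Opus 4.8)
The plan is to prove both implications, with the forward direction (simple closed curve $\Rightarrow$ primitive) being elementary and the reverse direction (primitive $\Rightarrow$ simple closed curve) being the substantial one.

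For the forward direction, suppose $\xi=[c]$ for an oriented simple closed curve $c$. Since $\xi\neq 0$, the curve $c$ cannot be separating, as a separating simple closed curve bounds a subsurface and is null-homologous. A non-separating simple closed curve admits a dual simple closed curve $d$ meeting it transversally in a single point, so that $\langle [c],[d]\rangle=\pm 1$. If we had $\xi=k\eta$ for some $\eta\in H_1(X,\Z)$ and some integer $k$, then $\pm 1=\langle\xi,[d]\rangle=k\langle\eta,[d]\rangle$ with $\langle\eta,[d]\rangle\in\Z$, forcing $|k|=1$. Hence $\xi$ is primitive.

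For the reverse direction, suppose $\xi$ is primitive. I would fix a symplectic basis of $H_1(X,\Z)$ realized by standard simple closed curves on $X$, with first basis vector $a_1=[\alpha_1]$. Two ingredients suffice. First, $\opname{Sp}_g(\Z)$ acts transitively on primitive vectors of $H_1(X,\Z)\cong\Z^{2g}$: given a primitive $\xi$, unimodularity of the intersection form produces a companion class $w$ with $\langle\xi,w\rangle=1$, and one completes the pair $(\xi,w)$ to a symplectic basis, exhibiting a symplectic automorphism carrying $a_1$ to $\xi$. Second, the representation $\rho:\Mod(X)\to\opname{Sp}_g(\Z)$ is surjective, because $\opname{Sp}_g(\Z)$ is generated by the transvections that arise as images of Dehn twists via the formula in Lemma \ref{formula}. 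Combining these, I choose $A\in\opname{Sp}_g(\Z)$ with $A(a_1)=\xi$ and lift it to $f\in\opname{Diff}^+(X)$ with $f_*=A$; then $f(\alpha_1)$ is a simple closed curve representing $f_*(a_1)=\xi$, since diffeomorphisms send simple closed curves to simple closed curves.

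The main obstacle is the reverse direction, and within it the surjectivity of $\rho$. Equivalently, one may route the argument through the \emph{change of coordinates principle}: $\Mod(X)$ acts transitively on isotopy classes of non-separating oriented simple closed curves, which together with the transitivity of $\opname{Sp}_g(\Z)$ on primitive vectors yields the claim directly. Both the surjectivity of $\rho$ and the change of coordinates principle are standard structural facts about mapping class groups of closed surfaces, but they are the genuinely content-bearing inputs here; once either is granted, the remainder of the argument is formal.
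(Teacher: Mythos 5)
The paper gives no proof of this lemma at all, citing it as \cite[Proposition 6.2]{MCG}, and your primary argument is exactly the standard one from that reference: a nonzero class represented by a simple closed curve is non-separating and pairs to $\pm 1$ with a dual curve, hence is primitive; conversely, transitivity of $\opname{Sp}_g(\Z)$ on primitive vectors of $\Z^{2g}$ together with surjectivity of $\rho:\Mod(X)\to\opname{Sp}_g(\Z)$ (a genuine theorem, but one whose proof via explicit generators realized by Dehn twists does not circularly depend on the present lemma) carries the class of a standard curve to any prescribed primitive class. One caveat on your closing aside: the change of coordinates principle by itself does not ``yield the claim directly,'' since transitivity of $\Mod(X)$ on non-separating simple closed curves only shows that the classes so realized form a single $\rho(\Mod(X))$-orbit, not that this orbit exhausts all primitive vectors --- so your main route through surjectivity of $\rho$ is the one that actually closes the argument, and it does.
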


By Lemma \ref{primitive}, we have  a group automorphism $T_\alpha \in \opname{Aut}(H_1(X, \Z), <\cdot, \cdot>)$ for any nonzero primitive element $\alpha \in H_1(X, \Z)$.
Combining Lemma \ref{formula} with Lemma \ref{primitive} gives us the following corollary.

\begin{corollary}\label{newformula}
Let $\alpha$ and $\beta$ be two primitive elements in $H_1(X, \Z)$. For any integer $k\geq 0$, we have
\begin{equation}\label{ourformula}
T_\beta^k(\alpha)=\alpha+k<\alpha,\beta>\beta
\end{equation}
\end{corollary}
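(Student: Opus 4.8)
The plan is to deduce the formula directly by combining Lemma \ref{primitive}, which lets us realize any primitive class by a simple closed curve, with the Dehn twist formula of Lemma \ref{formula}, which is phrased at the level of simple closed curves. The corollary is essentially a dictionary translation of the geometric Lemma \ref{formula} into purely homological language, so the work lies entirely in matching up the two formulations rather than in any new computation.

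First I would invoke Lemma \ref{primitive}: since $\alpha$ and $\beta$ are primitive (and I may assume nonzero, the formula being trivial otherwise), there exist oriented simple closed curves $a$ and $b$ in $X$ with $[a]=\alpha$ and $[b]=\beta$. Next I would unwind the definition $T_\beta := T_{[b]} = \rho(T_b)$. The point that makes this legitimate is the well-definedness already recorded above: because $\rho(T_a)=\rho(T_b)\Leftrightarrow [a]=[b]$, the automorphism $T_\beta$ depends only on the class $\beta=[b]$ and not on the chosen representative $b$. Since $\rho$ is a group homomorphism, taking powers commutes with $\rho$, so that $T_\beta^k = \rho(T_b)^k = \rho(T_b^k)$. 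Applying Lemma \ref{formula} to the curves $a$ and $b$ then yields
$$T_\beta^k(\alpha) = \rho(T_b^k)([a]) = [a] + k<[a],[b]>[b] = \alpha + k<\alpha,\beta>\beta,$$
which is exactly \eqref{ourformula}.

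The only genuine subtlety, and the step I would be most careful to spell out, is this passage from the geometric statement of Lemma \ref{formula} for specific simple closed curves to the homological statement of the corollary for primitive classes. This transition rests on two ingredients that are both already in hand: the independence of $T_\beta$ from the choice of representative curve (from the displayed equivalence $\rho(T_a)=\rho(T_b)\Leftrightarrow[a]=[b]$), and the compatibility of $\rho$ with taking $k$-th powers. Beyond these identifications no further computation is required, so I expect the proof to be short.
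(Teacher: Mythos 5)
Your proposal is correct and follows exactly the route the paper takes: the paper derives the corollary by combining Lemma \ref{primitive} (realizing primitive classes by simple closed curves) with Lemma \ref{formula} (the Dehn twist formula), using the well-definedness of $T_\beta=\rho(T_b)$ and the homomorphism property of $\rho$, just as you spell out. Your write-up simply makes explicit the bookkeeping that the paper leaves implicit, so there is nothing to correct.
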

Now, we will use (\ref{ourformula}) to prove Lemma \ref{thm-stable-mcg}.
%\begin{theorem}\label{invariant}
%The stable $\Z$-submodules of $H_1(X,\Z)$ under the representation
%$$ \rho : \Mod(X) \to \opname{Aut}(H_1(X, \Z), <\cdot, \cdot>)$$
%are $mH_1(X,\Z)$, $m\in \Z$.
%\end{theorem}
\vskip 5pt

\noindent{\bf \it Proof of Lemma \ref{thm-stable-mcg}.}\,
Let $V$ be a nonzero stable $\Z$-submodule of $H_1(X, \Z)$, then we only need to show $V=mH_1(X, \Z)$ for some nonzero integer $m$.

Pick a symplectic basis $\{\delta_i, \gamma_i\}_{i=1}^g$ of $H_1(X, \Z)$ with $<\delta_i, \gamma_j>=\delta_{ij}$ and $<\delta_i, \delta_j > =<\gamma_i, \gamma_j>=0$ for all $i, j$.  Note that  $<\delta_i, \gamma_i>=1$ , then $\{\delta_i, \gamma_i\}_{i=1}^g$ are primitive. Since $<\delta_i+\gamma_j,\gamma_i>=1$ and $<\delta_k+\delta_l, \gamma_k>=<\gamma_k+\gamma_l, \delta_k>=1$ for $k\neq l$, we get that $\delta_i+\gamma_j, \delta_k+\delta_l, \gamma_k+\gamma_l $ with $k\neq l$ are primitive.  By (\ref{ourformula}), we have the following formulas.
 \begin{equation}\label{base}
 \begin{aligned}
T_{\delta_j}(\delta_i)&=\delta_i  &\qquad T_{\gamma_j}(\gamma_i)&=\gamma_i\\
T_{\delta_j}(\gamma_i)&=\left\{
\begin{aligned}
&\gamma_i  &\text{for} \quad i\neq j \\
&\gamma_i-\delta_i  &\text{for} \quad i= j
\end{aligned} \right. &
T_{\gamma_j}(\delta_i)&=\left\{
\begin{aligned}
&\delta_i & \text{for} \quad i\neq j \\
 &\delta_i+\gamma_i  & \text{for} \quad i= j
 \end{aligned}\right.
 \end{aligned}
 \end{equation}

Pick a nonzero element $\alpha=\sum_{i=1}^g(a_i\delta_i+b_i\gamma_i)\in V$ with $a_i, b_i\in \Z$.   Denote by $d_\alpha=\gcd(a_1,b_1,\cdots, a_g,b_g)\geq 1$.
We separate the proof into the following 4 steps.
\vskip 5pt
\begin{itemize}
\item Step 1. \, Show $a_k H_1(X, \Z) \subseteq V$ for all $k=1, \cdots, g$.
\vskip 5pt
\item Step 2. \,  Show $b_k H_1(X, \Z) \subseteq V$ for all $k=1, \cdots, g$.
\vskip 5pt
\item Step 3. \, Show $d_\alpha H_1(X, \Z) \subseteq V$.
\vskip 5pt
\item Step 4. \,  Conclude $V= mH_1(X,\Z)$ for some positive integer $m$.
\end{itemize}
\vskip 5pt
%In order to simplify the notation, we denote by $T_{b}([a]) : = \rho(T_{b})[a]$.
%By the formula in Proposition \ref{formula}, we have
%\begin{equation}\label{newformula}
% T_{b}([a])=[a]+<a, b> [b]
% \end{equation}
% Pick a symplectic basis $\{[\delta_1], \cdots, [\delta_g], [\gamma_1], \cdots, [\gamma_g]\}$ of $H_1(X,\Z)$ with
%$<\delta_i, \gamma_j>=\delta_i^j$, $<\delta_i, \delta_j>=0=<\gamma_i, \gamma_j>$.

%Since $<\delta_i, \gamma_i>=1$ ,  $\{\delta_i, \gamma_i\}_{i=1}^g$ are primitive,

{\bf Step 1. Show $a_k H_1(X, \Z) \subseteq V$ for all $k=1, \cdots, g$.}

 Recall $\alpha=\sum\limits_{i=1}^g(a_i\delta_i+b_i\gamma_i)\in V$ with $a_i, b_i\in \Z$ and note that $T_{\delta_i}$ and $T_{\gamma_j}$ are group automorphisms of $H_1(X, \Z)$.
 By (\ref{base}), we have
\begin{equation*}
\begin{aligned}
T_{\gamma_k}(\alpha)&=a_k\left(T_{\gamma_k}\left(\delta_k\right)\right)+b_k\left(T_{\gamma_k}(\gamma_k)\right)+\sum\limits_{i\neq k}\left(a_iT_{\gamma_k}(\delta_i)+b_iT_{\gamma_k}(\gamma_i)\right)\\
&=a_k\left(\delta_k+\gamma_k\right)+b_k\gamma_k
+\sum\limits_{i\neq k}\left(a_i\delta_i+b_i\gamma_i\right)\\
&=a_k\gamma_k
+\sum\limits_{i =1}^g\left(a_i\delta_i+b_i\gamma_i\right)=a_k\gamma_k+\alpha \in V.
\end{aligned}
\end{equation*}
Therefore, we have $a_k\gamma_k =T_{\gamma_k}(\alpha) -\alpha \in V$, $k=1, \cdots, g$.
 Since $<\delta_i+\gamma_j,\gamma_i>=1$ and $<\delta_k+\delta_l, \gamma_k>=1$ for $k\neq l$, we get that $\delta_i+\gamma_j, \delta_k+\delta_l$ with $k\neq l$ are primitive. Continue to apply the Dehn twists to each $a_k\gamma_k$, we have
\begin{equation*}
\begin{aligned}
T_{\delta_k}(a_k\gamma_k)&=a_k\gamma_k- a_k\delta_k\in V,\\
T_{\delta_k+\delta_i}\left(a_k\gamma_k\right)
&=a_k\gamma_k - a_k\delta_k-a_k\delta_i\in V  \quad \text{for} \,\,  i\neq k, \\
T_{\delta_k+\gamma_i}\left(a_k\gamma_k\right)
&=a_k\gamma_k - a_k\delta_k-a_k\gamma_i \in V.
\end{aligned}
\end{equation*}
Therefore, we get $a_k \delta_1, \cdots, a_k \delta_g, a_k\gamma_1, \cdots, a_k\gamma_g \in V$ for all $k=1, \cdots, g$ and then $$a_k H_1(X, \Z) \subseteq V\quad \text{for all}\quad k=1, \cdots, g.$$

{\bf Step 2. Show $b_k H_1(X, \Z) \subseteq V$ for all $k=1, \cdots, g$.}

Similarly, redo the above calculation for $\delta_k$, we have
\begin{equation*}
\begin{aligned}
T_{\delta_k}(\alpha)&=a_k\left(T_{\delta_k}\left(\delta_k\right)\right)+b_k\left(T_{\delta_k}(\gamma_k)\right)+\sum\limits_{i\neq k}\left(a_iT_{\delta_k}(\delta_i)+b_iT_{\delta_k}(\gamma_i)\right)\\
&=a_k\delta_k+b_k\left(\gamma_k-\delta_k\right)
+\sum\limits_{i\neq k}\left(a_i\delta_i+b_i\gamma_i\right)\\
&=-b_k\delta_k
+\sum\limits_{i =1}^g\left(a_i\delta_i+b_i\gamma_i\right)=-b_k\delta_k+\alpha \in V.
\end{aligned}
\end{equation*}
Therefore, we have $b_k\delta_k = \alpha - T_{\delta_k}(\alpha) \in V$, $k=1, \cdots, g$.
Since $<\delta_i+\gamma_j,\gamma_i>=1$ and $<\gamma_k+\gamma_l, \delta_k>=1$ for $k\neq l$, we get that $\delta_i+\gamma_j, \gamma_k+\gamma_l $ with $k\neq l$ are primitive.
Continue to apply the Dehn twists to each $b_k\delta_k$, we have
\begin{equation*}
\begin{aligned}
T_{\gamma_k}(b_k\delta_k)&=
%b_kT_{\gamma_k}([\delta_k]) = b_k\left([\delta_k]+ [\gamma\right)=
b_k\delta_k+ b_k\gamma_k\in V,
% \Rightarrow b_k[\gamma_k] =T_{\gamma_k}(b_k[\delta_k])- b_k[\delta_k] \in V
\\
T_{\delta_i+\gamma_k}\left(b_k\delta_k\right)
&=b_k\delta_k + b_k\gamma_k+b_k\delta_i\in V \quad \text{for} \quad i\neq k,\\
T_{\gamma_i+\gamma_k}\left(b_k\delta_k\right)
&=b_k\delta_k + b_k\gamma_k+b_k\gamma_i \in V.
\end{aligned}
\end{equation*}
Therefore, we get $b_k \delta_1, \cdots, b_k \delta_g, b_k\gamma_1, \cdots, b_k\gamma_g\in V$ for all $k=1, \cdots, g$ and then $$b_k H_1(X, \Z) \subseteq V\quad \text{for all} \quad k=1, \cdots, g.$$

{\bf Step 3. Show $d_\alpha H_1(X, \Z) \subseteq V$.}

Since $d_\alpha=\gcd(a_1, b_1, \cdots, a_g, b_g)$, there are integers $u_1, v_1, \cdots, u_g, v_g$ such that $d_\alpha=\sum_{i=1}^g(u_ia_i+v_ib_i)$.  For any $k\in \{1, \cdots, g\}$, we have
\begin{equation*}
\begin{aligned}
d_\alpha\delta_k=\sum\limits_{i=1}^g\left(u_ia_i\delta_k+v_ib_i\delta_k\right) \in V,\\
d_\alpha\gamma_k=\sum\limits_{i=1}^g\left(u_ia_i\gamma_k+v_ib_i\gamma_k\right) \in V.
\end{aligned}
\end{equation*}
Therefore, $d_\alpha H_1(X, \Z) \subseteq V$.

{\bf Step 4. Conclude $V= mH_1(X,\Z)$ for some positive integer $m$.}

 If $d_\alpha H_1(X, \Z) = V$, then we are done.
 Otherwise, there exists
 $$\beta=\sum\limits_{i=1}^g(e_i[\delta_i]+f_i[\gamma_i]) \in V\setminus d_\alpha H_1(X, \Z), \qquad e_i, f_i \in \Z.$$
 Denote by $d_\beta=\gcd(e_1, f_1, \cdots, e_g, f_g)$, then $d_\alpha\nmid d_\beta$.  By above analysis, we get $$d_\beta H_1(X, \Z) \subseteq V.$$
 Clearly, $\gamma:=d_\alpha\delta_1+d_\beta \gamma_1 \in V$ and $d_\gamma =\gcd (d_\alpha, d_\delta)<d_\alpha$. Again, by the above analysis, we obtain $d_\gamma H_1(X, \Z) \subseteq V$.
 Therefore, we have
 $$d_\alpha H_1(X,\Z) \subsetneq d_\gamma H_1(X, \Z) \subseteq V\subseteq H_1(X, \Z) \qquad \text{with}\qquad d_\alpha > d_\gamma \geq 1, \quad d_\gamma | d_\alpha.$$
 Replace $\alpha$ by $\gamma$,  do the above discussion in this step again. Since each process will give us a smaller positive integer, by finitely many processes, we will get $V= mH_1(X,\Z)$ for some positive integer $m$, which is desired. \qquad\qquad  \qquad\qquad \qquad\qquad \qquad\qquad \qquad\quad
$\square$

%%%%%%%%%%%%%%%%%%%%%%%%%%%%%%%%%%%%%%%%%%%%%%%%%%%

\end{document}